\documentclass[reqno,12pt]{amsart}
\usepackage{amssymb}
\usepackage{mathrsfs}
\usepackage{txfonts}
\usepackage{amsfonts}
\usepackage{amstext}
\usepackage{amssymb}
\usepackage{amsmath}
\usepackage{graphicx}
\usepackage{hyperref}
\usepackage{url}
\usepackage{amssymb}
\usepackage{subfig}

\hypersetup{
        colorlinks   = true,
        citecolor    = blue,
        linkcolor    = blue,
        urlcolor     = blue
}
\allowdisplaybreaks
\newtheorem{theorem}{Theorem}[section]
\newtheorem{proposition}[theorem]{Proposition}
\newtheorem{lemma}[theorem]{Lemma}

\renewcommand{\theequation}{\thesection.\arabic{equation}}

\numberwithin{equation}{section}
\newcounter{counterConstant}

\input{tcilatex}

\def\Xint#1{\mathchoice
{\XXint\displaystyle\textstyle{#1}}%
{\XXint\textstyle\scriptstyle{#1}}%
{\XXint\scriptstyle\scriptscriptstyle{#1}}%
{\XXint\scriptscriptstyle\scriptscriptstyle{#1}}%
\!\int}
\def\XXint#1#2#3{{\setbox0=\hbox{$#1{#2#3}{\int}$ }
\vcenter{\hbox{$#2#3$ }}\kern-.6\wd0}}

\def\oint{\Xint-}

\def\FRAME#1#2#3#4#5#6#7#8
{
 \begin{figure}[ptbh]
 \begin{center}
 \includegraphics[height=#3]{#7}
 \caption{#5}
 \label{#6}
 \end{center}
 \end{figure}
}

\def\enddoc{\end{document}}

\begin{document}
\title[]{On the minimal generating weighted IFS of self-similar measure}
\author[Zhang]{Junda Zhang}
\address{School of Mathematics, South China University of Technology,
Guangzhou 510641, China.}
\email{summerfish@scut.edu.cn}

\begin{abstract}
We concern the structrue of generating weighted IFSs of a self-similar measure on the real line.
We provide various sufficient conditions for the existence of a minimal generating weighted IFS of a self-similar measure on the real line.
Under the homogeneity, we show that `most' self-similar measures on the real line have a minimal generating weighted IFS, without separation conditions.
The ingredients of our proofs are based on results of exponential polynomials (factorization theory and the distribution of zeros),
logarithmic commensurability (with a dynamical system argument), and results on the structrue of generating IFSs of a self-similar sets.
\end{abstract}

\date{\today}
\subjclass[2020]{28A80}
\keywords{weighted IFS, self-similar measure, exponential polynomial, strong separation condition.}
\maketitle
\tableofcontents

\section{Introduction}

    Self-similar measures form a fundamental class of fractal measures that arise naturally in the study of dynamical systems, harmonic analysis, and fractal geometry. They were introduced in a systematic way by Hutchinson~\cite{Hutchinson81} and have since been a central object of investigation. In this paper, we focus on standard self-similar measures on the real line (see recent progress in \cite{varju2023selfsimilar}).

    Let $N \geq 2$, and consider a standard \emph{iterated function system} (IFS) consisting of distinct contractive similitudes in $\mathbb{R}$:
\begin{equation}\label{eq:IFS}
    S_j(x) = r_j x + b_j, \qquad j = 1,\dots,N,
\end{equation}
where $0 < |r_j| < 1$ and $b_j \in \mathbb{R}$. Assign to each map $S_j$ a probability weight $p_j > 0$ such that $\sum_{j=1}^{N} p_j = 1$. We call the IFS $\{S_j\}_{j=1}^{N}$ associated with the probability vector $(p_1,\dots,p_N)$ a \emph{weighted IFS}.
A \emph{self-similar measure} associated with this weighted IFS is a Borel probability measure $\mu$ on $\mathbb{R}$ satisfying the invariance equation
\begin{equation}\label{eq:inv}
    \mu = \sum_{j=1}^{N} p_j \, \mu \circ S_j^{-1}.
\end{equation}
Hutchinson~\cite{Hutchinson81} proved that there exists a unique such measure, and its support is the \emph{attractor} of the IFS, namely, the unique non-empty compact set $K$ with \begin{equation}\label{at}K = \bigcup_{j=1}^{N} S_j(K).\end{equation} We call the IFS $\{S_j\}_{j=1}^{N}$ a \emph{generating IFS} of $K$, and the pair $(\{S_j\}_{j=1}^{N},(p_1,\dots,p_N))$ a \emph{generating weighted IFS} of $\mu$.
Throughout this paper, whenever we mention an IFS, we assume that it is standard (consisting of contractive similitudes). 
When all contraction ratios are equal, $r_j \equiv r$, we call the IFS \emph{homogeneous}. 
A homogeneous IFS of cardinality 2 with positive contraction ratio yields a measure $\mu$ called \textit{Bernoulli convolution} (without loss of generality, we may assume the translations are $b_j = 0$ or $1$) , which is one of the most studied and important examples in the literature, see for example \cite{varju2018recent} and references therein.

    Recall the following fundamental problem in fractal geometry : given a fixed $K \subset \mathbb{R}^n$, what can be said about the structure of its generating IFSs, with or without separation conditions? Feng and Wang \cite{FengWang2009} initiated this study for $K \subset \mathbb{R}$ under the separation condition OSC. They proved that all homogeneous generating IFSs with the OSC form a finitely-generated semigroup (if non-empty) when equipped with the composition, and also gave some sufficient conditions for these semigroups to have a minimal element (with or without homogeneity). They also provided some special examples where a minimal generating IFS does not exist. Later, Deng and Lau generalised the finitely-generated property for $K \subset \mathbb{R}^n$ under homogeneity and the separation condition SSC in \cite{DengLau2013}, and then relaxed the SSC to the OSC in \cite{DengLau2017}. Some further results on specific classes of self-similar sets were given, for example, on two connected fractals \cite{YaoLi2016} and on a construction with complete overlaps \cite{KongYao2021}.

    In this paper, we establish analogue results for self-similar measures on the real line and their generating weighted IFSs. A notable difference is that, in the homogeneous case, we do not require separation conditions in some results. We need some natural analogue definitions to \cite{FengWang2009}.

Let $(\{S_j\}_{j=1}^{N},(p_1,\dots,p_N))$  and $(\{T_i\}_{i=1}^{n},(q_1,\dots,q_n))$  be two weighted IFSs, their composition is defined as
$$(\{S_j\},(p_1,\dots,p_N))\circ (\{T_i\},(q_1,\dots,q_n))=(\{S_j\circ T_i\},(p_jq_i)_{i,j}).$$
There are infinitely many weighted IFSs that yields a same self-similar measure, for example, using this composition procedure. Denote by $\mathcal{M}(\mu)$ the set of all generating (standard) weighted IFSs of $\mu$, and denote by $\mathcal{M}_{\mathrm{P}%
}(\mu)\subset \mathcal{M}(\mu)$ the set of all generating weighted IFSs of $\mu$ satisfying the same property $%
\mathrm{P}$. In this paper, we concern two properties. One is homogeneity, denoted by P=HOM. Another is a separation condition P=SSC, which means that the union is disjoint in \eqref{at}.

    We present our main results for the P=HOM case.
\begin{theorem}\label{thm0}
Let $\Phi$ be a homogeneous IFSs in $\mathbb{R}$ with cardinality 2. Let $\mu$ be the self-similar measure generated by $(\Phi,\textbf{p})$ where $\textbf{p}$ is a probability vector that is not (0.5,0.5). Then $\mathcal{M}_{\mathrm{HOM}%
}(\mu)$ has a minimal element $(\Phi,\textbf{p})$, that is, all homogeneous generating weighted  IFSs of  $\mu$ is an iteration of  $(\Phi,\textbf{p})$.
\end{theorem}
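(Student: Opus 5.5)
The plan is to work on the Fourier side, where the invariance equation becomes a functional equation for an entire function, and to read off the contraction ratio of any homogeneous generating weighted IFS from the zero set of $\widehat\mu$.

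\emph{Normalization and Fourier setup.} After conjugating by an affine map (which changes neither the statement nor the composition structure), assume $\Phi=\{rx,\ rx+1\}$ with $0<|r|<1$ and $\mathbf p=(p,1-p)$, $p\neq 1/2$. Taking Fourier transforms in \eqref{eq:inv} gives
$$\widehat\mu(\xi)=P(\xi)\,\widehat\mu(r\xi),\qquad P(\xi)=p+(1-p)e^{i\xi}.$$
Iterating,
$$\widehat\mu(\xi)=\prod_{k\ge0}P(r^k\xi),$$
and this product converges locally uniformly on $\mathbb C$, so $\widehat\mu$ is entire. The zeros of $P$ form the set
$$Z_0=\{\xi:\ e^{i\xi}=-p/(1-p)\},$$
which is an arithmetic progression $(2\pi\mathbb Z+\pi)+ia$ on the horizontal line $\operatorname{Im}\xi=a$, where $a=\log((1-p)/p)$. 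The hypothesis $p\neq1/2$ is exactly the statement that $a\neq0$. Hence the zero set of $\widehat\mu$ is
$$Z=\bigcup_{k\ge0} r^{-k}Z_0 .$$
Each piece $r^{-k}Z_0$ is an infinite subset of the horizontal line $\operatorname{Im}\xi=ar^{-k}$. Because $a\neq0$, these lines are pairwise distinct, even when $r<0$ and the signs alternate.

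\emph{Determining the ratio.} Let $(\Psi,\mathbf q)\in\mathcal M_{\mathrm{HOM}}(\mu)$, with $\Psi=\{sx+c_i\}_{i=1}^n$ and $0<|s|<1$. The same computation gives
$$\widehat\mu(\xi)=Q(\xi)\,\widehat\mu(s\xi),\qquad Q(\xi)=\sum_i q_ie^{ic_i\xi},$$
where $Q$ is an entire exponential polynomial. Every zero of $\widehat\mu(s\,\cdot)$ is therefore a zero of $\widehat\mu$, that is, $s^{-1}Z\subset Z$. In particular $s^{-1}Z_0$, an infinite set on the line $\operatorname{Im}\xi=a/s$, lies in $Z$. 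Each $r^{-k}Z_0$ is a discrete progression on its own line, and $Z$ meets the line $\operatorname{Im}\xi=a/s$ only in those pieces whose line coincides with it. So $a/s=ar^{-k}$ for some $k\ge0$, giving $s=r^k$; the case $k=0$ is excluded since $|s|<1$. This step, which matches horizontal lines of zeros, is the heart of the argument and the only place $p\neq 1/2$ is used. When $p=1/2$ all zeros are real, so this rigidity is lost. I expect this to be the main point to write carefully: one must check that no other piece $r^{-j}Z_0$ can supply infinitely many points on the line, which holds because distinct pieces lie on distinct horizontal lines.

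\emph{Identifying the maps and weights.} With $s=r^k$, iterating the equation for $\Phi$ $k$ times gives
$$\widehat\mu(\xi)=\prod_{j<k}P(r^j\xi)\,\widehat\mu(r^k\xi).$$
Comparing with $\widehat\mu(\xi)=Q(\xi)\widehat\mu(r^k\xi)$, and using that $\widehat\mu(r^k\cdot)$ is entire and not identically zero, we get $Q(\xi)=\prod_{j<k}P(r^j\xi)$ off a discrete set, hence everywhere. Expanding the product gives an exponential sum whose frequencies are the translations of the maps in $\Phi^k$ and whose coefficients are the product weights. By linear independence of $e^{i\lambda\xi}$ for distinct real $\lambda$, the frequencies and coefficients on both sides agree after collecting equal frequencies. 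Hence $\Psi$, with weights $\mathbf q$, is precisely the $k$-fold composition $(\Phi,\mathbf p)^{\circ k}$, where coinciding maps are merged and their weights added; this is the intended meaning of ``iteration'' in the presence of overlaps. This shows every element of $\mathcal M_{\mathrm{HOM}}(\mu)$ is an iteration of $(\Phi,\mathbf p)$, so $(\Phi,\mathbf p)$ is minimal.
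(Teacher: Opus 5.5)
Your proof is correct, and it takes a different and more elementary route than the paper's.

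The paper's argument has three stages:
\begin{itemize}
\item It uses Theorem \ref{thm2} (condition (Z) implies (HLC), via the rotation argument of Lemma \ref{2.1}). This only shows that the absolute values of the two contraction ratios are rational powers of each other.
\item It then appeals to Ritt's factorization theory to force the ratio of $\Psi$ to be an integer power of the ratio of $\Phi$.
\item It needs a separate symmetry argument to exclude the sign $b=-a^k$: $f(x)=e^{cx}f(-x)$ would force $\mathbf p=(0.5,0.5)$.
\end{itemize}

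You work directly with the explicit zero set of the two-term exponential polynomial. Every zero of $P$ lies on the single line $\operatorname{Im}\xi=a\neq0$, so the zeros of $\widehat\mu$ sit on the pairwise distinct lines $\operatorname{Im}\xi=ar^{-k}$. The inclusion $s^{-1}Z\subset Z$ follows from the single functional equation of $\Psi$, and it yields $s=r^k$ exactly, sign included, in one step. You then cancel the entire, not identically zero factor $\widehat\mu(r^k\cdot)$ and use uniqueness of coefficients of exponential sums with distinct real frequencies. No factorization theory is needed.

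Your route is self-contained and avoids a delicate point in the paper. The polynomial $1-p+pe^{z}$ is a simple factor in Ritt's sense: it splits as a polynomial in $e^{z/m}$ for every $m$, so it has no finite factorization into irreducibles. The paper's count $p=kq$ therefore needs care.

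The paper's route is designed to carry over to Theorem \ref{thm1}. There the zeros no longer lie on one line, and the algebraic input (irreducibility for $\mathbb Q$-independent frequencies) does real work.

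Finally, your explicit merging of coinciding maps in $\Phi^k$, with their weights added, matches what the paper tacitly means by ``iteration.''
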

\begin{theorem}\label{thm1}
Let $\Phi$ be a homogeneous IFSs in $\mathbb{R}$ with cardinality no less than 3, such that all its nonzero translations are linearly independent over $\mathbb{Q}$. Let $\mu$ be the self-similar measure generated by $(\Phi,\textbf{p})$ where $\textbf{p}$ is a probability vector. Then $\mathcal{M}_{\mathrm{HOM}%
}(\mu)$ has a minimal element $(\Phi,\textbf{p})$, that is, all  homogeneous generating weighted  IFSs of  $\mu$ is an iteration of  $(\Phi,\textbf{p})$.
\end{theorem}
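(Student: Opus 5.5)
We work with Fourier transforms. Write $\Phi=\{rx+b_j\}_{j=1}^N$ with $N\ge 3$. After a translation we may assume $b_1=0$, so that the nonzero translations $b_2,\dots,b_N$ are $\mathbb{Q}$-linearly independent. Iterating \eqref{eq:inv} gives
$$\widehat{\mu}(\xi)=\prod_{k\ge 0}P(r^k\xi),\qquad P(\xi)=\sum_{j}p_je^{-2\pi i b_j\xi}.$$
Let $\Psi=\{\rho x+c_i\}_{i=1}^n$ with weights $\mathbf{q}$ be any homogeneous generating weighted IFS of $\mu$, and set $Q(\xi)=\sum_i q_ie^{-2\pi i c_i\xi}$. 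Then also $\widehat\mu(\xi)=\widehat\mu(\rho\xi)Q(\xi)$, and likewise $\widehat\mu(\xi)=\widehat\mu(r\xi)P(\xi)$. The goal is to show that $\rho=r^m$ for some $m\ge1$, and then that
$$Q(\xi)=\prod_{k=0}^{m-1}P(r^k\xi).$$
Uniqueness of the coefficients of an exponential polynomial then identifies $(\Psi,\mathbf q)$ with the $m$-th iterate of $(\Phi,\mathbf p)$. Positivity of weights and distinctness of maps need care here, but $\mathbb{Q}$-independence makes all the sums $b_{j_0}+rb_{j_1}+\dots$ distinct once $r$ is handled.

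\textbf{Step 1: logarithmic commensurability.} We first show $\log|\rho|/\log|r|\in\mathbb{Q}$. Suppose the ratio were irrational. The support $K$ is generated by both $\Phi$ and $\Psi$, so we invoke the structural results on generating IFSs of self-similar sets mentioned in the abstract. The linear independence of the $b_j$ forces $K$ to have either positive measure with structure incompatible with a non-commensurable ratio, or else gaps at every scale $r^k$. In the latter case we compare gap lengths under the two self-similarities, run the dynamical argument of irrational rotation on $\log|r|$-scales, and obtain a contradiction. Passing to iterates of both IFSs, we may then assume $\rho=r^m$ up to exchanging roles. If instead $|r|^m=|\rho|^\ell$ with $\ell>1$, we pass to $\Psi^\ell$ and $\Phi^m$ and show the conclusion descends.

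\textbf{Step 2: factorization.} With $\rho^\ell=r^m$, compare
$$\widehat\mu(\xi)=\widehat\mu(r^m\xi)\prod_{k<m}P(r^k\xi)=\widehat\mu(r^m\xi)\prod_{k<\ell}Q(\rho^k\xi).$$
Since $\widehat\mu$ is entire (compact support) and not identically zero, dividing gives
$$\prod_{k<m}P(r^k\xi)=\prod_{k<\ell}Q(\rho^k\xi)$$
as exponential polynomials. We then use the factorization theory of exponential polynomials (Ritt's theorem). The frequencies $b_2,\dots,b_N$ span a lattice of rank $N-1$, so $P$ corresponds to a Laurent polynomial $1+\sum_j p_j z_j$ (after normalization) in independent variables $z_j=e^{-2\pi i b_j\xi}$. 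With $N\ge3$ this polynomial is linear, hence irreducible, and it has no ``simple'' (single-frequency, $1-e^{a\xi}$-type) factors, so unique factorization applies. Each $P(r^k\xi)$ is then an irreducible factor, and $Q(\rho^k\xi)$ must be a product of a subset of them. Matching frequencies under scaling by $\rho$ forces $\ell=1$ and $Q=\prod_{k<m}P(r^k\cdot)$. The sign of $\rho$ versus $r^m$ is decided by comparing the frequency sets.

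\textbf{Main obstacle.} Step 1 is where I expect most of the difficulty: ruling out a non-commensurable $\rho$ without separation conditions. The first thing to try is a zero-distribution argument. The zero sets of $\widehat\mu$ are unions $\bigcup_k r^{-k}Z(P)$, and $Z(P)$ lies in a vertical strip with asymptotically regular density by Pólya-type results. Invariance of these zero sets under scaling by both $r^{-1}$ and $\rho^{-1}$, together with the density counts, should then force commensurability. If the density argument fails, I would fall back on the self-similar-set structure results.
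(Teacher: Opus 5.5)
Your Step~2 is essentially the paper's argument: Ritt's unique factorization in $EP$, irreducibility of $P$ from the independent frequencies, and counting irreducible factors in $\prod_{k<m}P(r^k\xi)=\prod_{k<\ell}Q(\rho^k\xi)$ to force $\ell=1$. Step~1, however, has a genuine gap, and it is where the real content lies, since no separation condition is assumed. Neither of your two routes works.

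The attractor route fails because, under the hypotheses, $K$ can be an interval. For example, $r=0.9$ with translations $0,\sqrt2/10,\sqrt3/10$ gives $K=[0,\sqrt3]$. An interval is the attractor of homogeneous IFSs of every ratio, so $K$ says nothing about $\log|\rho|/\log|r|$. The dichotomy you assert is unjustified, and set-level commensurability results (Feng--Wang's Lemma 5.1) need the no-interval condition.

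The density route also fails as stated. Counting points of $\bigcup_k r^{-k}Z(P)$ in discs only recovers the length of the convex hull of $K$. Moreover, a zero set of this form can be generated by non-commensurable ratios: for Lebesgue measure on $[0,1]$, $Z(\widehat\mu)=\mathbb{Z}\setminus\{0\}$ arises from both $r=1/2$ and $r=1/3$. That example shows the $\mathbb{Q}$-independence must enter Step~1 in a specific way, and your proposal never says how.

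The missing idea is to restrict the zero set to a line through the origin.
\begin{itemize}
\item By the generic nonlattice case of Lapidus--van Frankenhuijsen (Lemma~\ref{3.0}), independence with $N\ge 3$ guarantees a zero $z_0$ of $P$ off the real axis.
\item Pass to second iterates so that $r,\rho>0$; $z_0$ remains a zero of the new $P$.
\item In your variable $\xi$, $Z(P)$ lies in a horizontal strip $\{|\operatorname{Im}\xi|\le C\}$, not a vertical one. So the line $L=\mathbb{R}z_0$ meets $Z(P)$ in a finite set $A$.
\item Lemma~\ref{2.0} then gives $Z(\widehat\mu)\cap L=\bigcup_{k\ge0}r^{-k}A=\bigcup_{k\ge0}\rho^{-k}B$ with $B=Z(Q)\cap L$.
\item Take logarithms of the parameter $t$ on the ray $\{tz_0:t>0\}$. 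The resulting set is finite modulo $\log r^{-1}$, yet closed under adding $\log\rho^{-1}$.
\item If $\log\rho/\log r\notin\mathbb{Q}$, the orbit of an irrational rotation would make it infinite (Lemma~\ref{2.1}).
\end{itemize}

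A smaller point in Step~2: ``linear, hence irreducible'' does not suffice in $EP$, because $e^{\alpha\xi}=(e^{\alpha\xi/t})^t$. Ritt's theorem requires $1+\sum_j p_jz_j^{t_j}$ to be irreducible for all positive integers $t_j$; compare $1-e^{\xi}=(1-e^{\xi/2})(1+e^{\xi/2})$.
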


    These two theorem show that, `most' homogeneous self-similar measures on the line have a minimal generating weighted IFS, since in the parameter space formed by translations (or probability vector), Lebesgue almost all choices satisfies the linearly independency (or not the Lebesgue measure). There are homogeneous self-similar measures on the line that do not have a minimal generating weighted IFS, for example, the Lebesgue measure on [0,1].

    The proof of these theorems make use of the following theorem. Given a weighted homogeneous IFS $(\{S_j\}_{j=1}^N,(p_1,\dots,p_N))$, we call \[
m(\xi) = \sum_{j=1}^N p_j \, e^{-2\pi i b_j \xi}
\] its \textit{corresponding exponential polynomial}.  An \emph{exponential polynomial} is a finite sum of the form
\[
f(z)=\sum_{j=1}^{n} a_j e^{\alpha_j z},
\]
where the coefficients $a_j$ and the frequencies $\alpha_j$ are complex numbers. Clearly, any exponential polynomial with real frequencies and normalized positive coefficients (that is, the sum of coefficients is 1) combined with a common contraction ratio $r$ uniquely corresponds to a weighted homogeneous IFS. We say that a homogeneous weighted IFS $(\Phi,\textbf{p})$ satisfies \textit{condition (Z)} if, either its attractor satisfies the `no-interval condition' in \cite[Before Lemma 5.1]{FengWang2009}, or its corresponding exponential polynomial has a complex zero that is not purely imaginary. We say that a measure $\mu$ satisfies \textit{condition  (HLC)}, short for `homogeneous logarithmic commensurability', if the absolute values of the common contraction ratios of IFSs in $\mathcal{M}_{\mathrm{HOM}}(\mu)$ are rational powers of each other.
\begin{theorem}\label{thm2}
If a homogeneous weighted IFS $(\Phi,\textbf{p})$ satisfies condition (Z), then its self-similar measure $\mu$ satisfies condition  (HLC).
\end{theorem}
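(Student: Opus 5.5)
The plan is to split along the two alternatives in condition (Z). Throughout, let $\Phi=\{rx+b_j\}_{j=1}^N$ with weights $\mathbf p$, and let $(\Psi,\mathbf q)\in\mathcal{M}_{\mathrm{HOM}}(\mu)$ be arbitrary, with $\Psi=\{\rho x+c_i\}_{i=1}^n$. The goal is to show that $\log|\rho|/\log|r|\in\mathbb{Q}$.

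\emph{No-interval alternative.} Here I would reduce to sets. Since $\operatorname{supp}\mu=K$ for every generating weighted IFS, $\Psi$ is a homogeneous generating IFS of the attractor $K$ of $\Phi$. The logarithmic commensurability of the contraction ratios of homogeneous generating IFSs of $K$ under the no-interval condition is exactly what Feng and Wang establish \cite[Sect.~5]{FengWang2009}. I will quote that result. This is the part where I rely on the literature; the only thing to check is that their hypotheses (homogeneity, $K\subset\mathbb R$, no interval) are met, and no separation condition is needed for that step.

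\emph{Zero alternative.} This is the genuinely new case, and I would use Fourier analysis. Since $\mu$ has compact support, $\widehat\mu(\xi)=\int e^{-2\pi i x\xi}\,d\mu(x)$ extends to an entire function of $\xi\in\mathbb C$. The invariance equation gives
\[
\widehat\mu(\xi)=m(\xi)\,\widehat\mu(r\xi)\qquad\text{and}\qquad \widehat\mu(\xi)=g(\xi)\,\widehat\mu(\rho\xi),
\]
where $g$ is the exponential polynomial of $(\Psi,\mathbf q)$. Let $Z$ be the zero set of $\widehat\mu$. From $\widehat\mu(r\xi)=0\Rightarrow\widehat\mu(\xi)=0$ I get $r^{-1}Z\subset Z$, and likewise $\rho^{-1}Z\subset Z$. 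Iterating $\widehat\mu(\xi)=\prod_{k\ge0}m(r^k\xi)$ (uniformly convergent after normalising translations, e.g.\ placing a fixed point at $0$, which only multiplies by a nonvanishing exponential) gives
\[
Z=\bigcup_{k\ge0}r^{-k}Z(m).
\]
Replacing $r,\rho$ by $r^2,\rho^2$ if needed (harmless for rational-power questions), I may assume $r,\rho>0$.

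Now take a zero $z_0$ of $m$ that is not real in the $\xi$-variable; this is the "not purely imaginary" zero in the exponential variable. Let $\theta=\arg z_0\notin\{0,\pi\}$ and consider the ray $R_\theta$. Zeros of an exponential polynomial with real frequencies lie in a horizontal strip $|\operatorname{Im}\xi|\le C$. Hence $Z(m)\cap R_\theta$ is bounded, and being discrete it is finite. Define $L=\{\log|z|: z\in Z\cap R_\theta\}$ and set $a=\log(1/r)$, $b=\log(1/\rho)$. Then $L=F+a\mathbb N$ with $F$ finite and nonempty, and the scaling invariance $\rho^{-1}Z\subset Z$ gives $L+b\subset L$. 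Consequently $x_0+a\mathbb N+b\mathbb N\subset F+a\mathbb N$ for any $x_0\in F$.

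If $a/b\notin\mathbb Q$, the numbers $ka+lb$ are pairwise distinct. Then the left side has about $t^2/(2ab)$ elements in $[x_0,x_0+t]$, while the right side has at most $|F|(t/a+1)$. This is a contradiction for large $t$, so $a/b\in\mathbb Q$, i.e.\ $\rho$ is a rational power of $r$, which is (HLC).

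\emph{Main obstacles.} I expect the main care to be needed in justifying the identity $Z=\bigcup_k r^{-k}Z(m)$ as entire functions, which requires controlling convergence of the infinite product off the real axis, and in the normalisation of translations. The strip lemma for zeros of exponential polynomials with real frequencies is classical, and the counting argument is elementary once those two points are settled.
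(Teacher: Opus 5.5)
Your proof is correct and is essentially the paper's: you quote Feng--Wang for the no-interval alternative. For the zero alternative you intersect $Z(\widehat\mu)=\bigcup_k r^{-k}Z(m)$ with a line or ray through a non-real zero of $m$, use the strip bound to make the base set $F$ finite, and conclude from the fact that $F+a\mathbb{N}$ is invariant under translation by $b$.

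The only difference is the last step, where you use a counting argument instead of the paper's reduction modulo $\log a$ and the infiniteness of irrational rotation orbits (Lemma~\ref{2.1}). The convergence issue you flag is harmless: the identity $Z(\widehat\mu)=\bigcup_k r^{-k}Z(m)$ already follows from the finite iterates $\widehat\mu(\xi)=\prod_{k<n}m(r^k\xi)\,\widehat\mu(r^n\xi)$ together with $\widehat\mu(r^n\xi)\to 1$.
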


    In condition (Z), the `no-interval' condition is automatically satisfied when the absolute value of the contraction ratio is small, while the `zero condition' does not rely on the contraction ratio at all. Certain conditions must be required, like condition (Z), to guarantee (HLC).
With more effort, one can show that, when the measure $\mu$ satisfies condition  (HLC), $\mathcal{M}_{\mathrm{HOM}%
}(\mu)$ is a finitely generated semigroup. Since Moran equation is not available, new ingredient is required compared with \cite{FengWang2009}. But this `finitely generated' property might fail without (HLC). To see this, just consider the Lebesgue measure on [0,1]. There are also further counterexamples based on convolutions of the Lebesgue measure on different intervals.

    Our result for the inhomogeneous P=SSC case heavily relies on that of self-similar sets. We say that a weighted IFS $(\Psi,\textbf{q})$ is \textit{derived from} $(\Phi,\textbf{p})$, if each contraction in $\Psi$ is in the form $\phi_{w}:=\phi_{w_1}\circ...\circ\phi_{w_m}$ with associated probability $p_{w}:=p_{w_1}...p_{w_m}$ for some word $w=w_1...w_m$ associated with $\Phi$, and $\Psi$ shares the same attractor with $\Phi$.
\begin{theorem}\label{thm3}
Let $\Phi$ be an IFSs in $\mathbb{R}$ satisfying the SSC with attractor $K$, such that each generating IFS of $K$ with the SSC is derived from $\Phi$. Let $\mu$ be the self-similar measure generated by $(\Phi,\textbf{p})$ where $\textbf{p}$ is a probability vector. Then $\mathcal{M}_{\mathrm{SSC}}(\mu)$ has a minimal element $(\Phi,\textbf{p})$, that is, all generating weighted IFSs of  $\mu$ satisfying the SSC is derived from $(\Phi,\textbf{p})$.
\end{theorem}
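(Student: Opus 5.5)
Let $(\Psi,\mathbf{q})\in\mathcal{M}_{\mathrm{SSC}}(\mu)$ with $\Psi=\{\psi_i\}_{i=1}^n$. The plan is to first pass from the measure to its support, and then to recover the weights from the measure.

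The first step is on the level of sets. Since the support of $\mu$ is $K$ and $(\Psi,\mathbf{q})$ generates $\mu$, the attractor of $\Psi$ is also $K$. So $\Psi$ is a generating IFS of $K$ with the SSC. By hypothesis $\Psi$ is derived from $\Phi$: each $\psi_i=\phi_{w^{(i)}}$ for some word $w^{(i)}$ over the alphabet of $\Phi$. It remains to show $q_i=p_{w^{(i)}}$ for every $i$.

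The second step identifies the weights. By the SSC for $\Psi$, the pieces $\psi_i(K)$ are pairwise disjoint compact sets, so they are positively separated. Evaluating the invariance equation for $\Psi$ on $\psi_i(K)$ gives
\[
\mu(\psi_i(K))=\sum_{l}q_l\,\mu\bigl(\psi_l^{-1}(\psi_i(K))\bigr)=q_i .
\]
Indeed, for $l\neq i$ the set $\psi_l^{-1}(\psi_i(K))\cap K$ is empty, and $\mu$ is carried by $K$. On the other side, $\Phi$ also satisfies the SSC, so iterating the invariance equation for $\Phi$ gives
\[
\mu(\phi_w(K))=p_w
\]
for every word $w$. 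This uses the same disjointness argument at each level: the cylinders $\phi_w(K)$ with $|w|=m$ are disjoint, and $\phi_v^{-1}(\phi_w(K))\cap K=\emptyset$ for $v\neq w$ of equal length. Applying this with $w=w^{(i)}$ yields $q_i=\mu(\psi_i(K))=\mu(\phi_{w^{(i)}}(K))=p_{w^{(i)}}$. Hence $(\Psi,\mathbf{q})$ is derived from $(\Phi,\mathbf{p})$.

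The point needing the most care is the meaning of ``derived''. If $\Psi$ may contain a map $\phi_w$ that also equals $\phi_{w'}$ for a different word $w'$, the weight would seem ambiguous. Under the SSC for $\Phi$ this cannot happen: $\phi_w=\phi_{w'}$ with $w\neq w'$ would force two distinct cylinders to coincide, or one to lie inside the other. Comparing at the first differing letter, both are contained in disjoint first-level pieces, which is impossible unless one word is a prefix of the other. In the prefix case the ratios would differ. So the word $w^{(i)}$ is unique and the weight $p_{w^{(i)}}$ is well defined.

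Otherwise the argument is short. The substantive content is carried by the set-level hypothesis together with the fact that, under the SSC, the measure of a cylinder equals its weight.
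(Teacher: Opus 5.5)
Your proposal is correct and follows the paper's proof: you use the support of $\mu$ to see that $\Psi$ is an SSC generating IFS of $K$, apply the hypothesis to write each map as $\phi_w$, and conclude $q_w=\mu(\phi_w(K))=p_w$ from the SSC of $\Psi$ and of $\Phi$. Your version adds the explicit invariance-equation computations behind both equalities. The uniqueness-of-word remark is harmless but not needed, since $\mu(\phi_w(K))=p_w$ holds for every word $w$.
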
     The condition `each generating IFS of $K$ with the SSC is derived from $\Phi$' is fulfilled under some easily checkable conditions, see for example, \cite[Theorem 4.1]{FengWang2009}. We remark that, without homogeneity or separation conditions, the structure of $\mathcal{M}(\mu)$ is always complicated, see an example in Section 4.

    The structure of this paper is as follows. We first prove Theorem \ref{thm2} in Section 2. Then, we prove Theorems \ref{thm1} and \ref{thm0} in Section 3. Finally, we prove Theorem \ref{thm3} and present an example in Section 4.

\section{Proof of Theorem \ref{thm2}}
Recall that the Fourier transform of a finite Borel measure $\mu$ on $\mathbb{R}$ is defined by
\begin{equation}\label{eq:Fourier}
    \widehat{\mu}(\xi) = \int_{\mathbb{R}} e^{-2\pi i x \xi} \, d\mu(x), \qquad \xi \in \mathbb{R}.
\end{equation}
For a self-similar measure satisfying \eqref{eq:inv}, one readily obtains the functional equation
\begin{equation}\label{eq:FTeq}
    \widehat{\mu}(\xi) = \sum_{j=1}^{N} p_j \, e^{-2\pi i b_j \xi} \; \widehat{\mu}(r_j \xi).
\end{equation}
Iterating \eqref{eq:FTeq} leads to explicit representations of $\widehat{\mu}(\xi)$. In the homogeneous case where $r_j = r$ for all $j$, one obtains the classical infinite product expansion
\begin{equation}\label{eq:prod}
    \widehat{\mu}(\xi) = \prod_{k=0}^{\infty} \Bigg( \sum_{j=1}^{N} p_j \, e^{-2\pi i b_j r^{k} \xi} \Bigg),
\end{equation}
which is a uniformly convergent infinite product on compact subsets of $\mathbb{R}$, making it a Riesz-type product.

We will analyse the zero sets of the Fourier transform on the complex plane. Given a complex function $f$ defined on the complex plane, denote by $Z(f)$ the set of its zeros (on the complex plane).
\begin{lemma}\label{2.0}Let $\mu$ be the self-similar measure generated by a homogeneous weighted IFSs $(\Phi,\textbf{p})$ in $\mathbb{R}$ with contraction ratio $r$, and $m(\xi)$ being the corresponding exponential polynomial. Then the set of the zeros of $\widehat{\mu}$ on the complex plane is $$Z(\widehat{\mu}):=\bigcup_{k=0}^{\infty} r^{-k} Z(m).$$
\end{lemma}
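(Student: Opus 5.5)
We work with the analytic extension of $\widehat{\mu}$ to $\mathbb{C}$ given by the product formula \eqref{eq:prod}, namely
\[
\widehat{\mu}(z)=\prod_{k=0}^{\infty} m(r^k z), \qquad m(z)=\sum_{j=1}^N p_j e^{-2\pi i b_j z}.
\]
The plan has three steps. First, show this product converges locally uniformly on $\mathbb{C}$ to an entire function. Second, use Hurwitz's theorem (or the standard fact on zeros of convergent products of holomorphic functions) to identify $Z(\widehat{\mu})$ with the union of the zero sets of the factors. Third, check that $Z(m(r^k\cdot))=r^{-k}Z(m)$.

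\textbf{Convergence.} Since $m$ is entire and $m(0)=\sum p_j=1$, there is a constant $C$ with $|m(w)-1|\le C|w|$ for $|w|\le 1$. On a compact set $|z|\le R$ we have $|r^k z|\le R|r|^k\le 1$ for all $k\ge k_0(R)$. For such $k$,
\[
|m(r^kz)-1|\le CR|r|^k .
\]
The right-hand side is summable in $k$, so $\sum_k |m(r^k z)-1|$ converges uniformly on the disc. Hence the infinite product converges uniformly on compacts to an entire function $F$. On $\mathbb{R}$ this $F$ agrees with $\widehat{\mu}$ by \eqref{eq:prod}, so $F$ is the extension we want.

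\textbf{Zero set.} Fix $R$ and split $F=P_{k_0}\cdot G$ with
\[
P_{k_0}(z)=\prod_{k<k_0} m(r^kz), \qquad G(z)=\prod_{k\ge k_0} m(r^kz).
\]
The finite product $P_{k_0}$ vanishes exactly on $\bigcup_{k<k_0} r^{-k}Z(m)$, since $m(r^kz)=0$ if and only if $z\in r^{-k}Z(m)$. For the tail we may enlarge $k_0$ so that $CR|r|^k\le 1/2$ for all $k\ge k_0$. Then every tail factor satisfies $|m(r^kz)-1|\le 1/2$ on $|z|\le R$, so it is nonzero there, and we may write
\[
G=\exp\Big(\sum_{k\ge k_0}\operatorname{Log} m(r^kz)\Big).
\]
The series converges uniformly because $|\operatorname{Log}(1+w)|\le 2|w|$ for $|w|\le 1/2$. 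Hence $G$ has no zeros in $|z|\le R$. It follows that in the disc $|z|\le R$,
\[
Z(F)=\bigcup_{k<k_0} r^{-k}Z(m).
\]
Moreover, the zeros of $r^{-k}Z(m)$ for $k\ge k_0$ lie outside the disc, by the same bound. Letting $R\to\infty$ gives $Z(\widehat{\mu})=\bigcup_{k\ge0} r^{-k}Z(m)$. Multiplicities also add, though this is not needed.

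The main obstacle is only the careful uniform tail estimate that excludes zeros of the tail product on each disc. Once $k_0$ is chosen depending on $R$ as above, everything else is routine. The case $r<0$ needs no change, since only $|r|$ enters the estimates.
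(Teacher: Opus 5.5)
Your proof is correct and takes the same route as the paper, whose entire proof is that the lemma ``directly follows from \eqref{eq:prod}''. You supply the details the paper omits: locally uniform convergence of $\prod_{k\ge0} m(r^k z)$ on all of $\mathbb{C}$ (the paper only states convergence on compact subsets of $\mathbb{R}$), and the non-vanishing of the tail product on each disc, which is exactly what justifies reading off the zero set factor by factor.
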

\begin{proof}This directly follows from \eqref{eq:prod}.\end{proof}

The following important logarithmic commensurability lemma is required.
\begin{lemma}\label{2.1}
Let \(A\) be a finite set of real numbers, and let \(B\) be an arbitrary set of real numbers. Let \(a,b > 1\) and assume
\[
S = \bigcup_{k=0}^{\infty} a^k A = \bigcup_{k=0}^{\infty} b^k B .
\]
If \(S\) contains a non-zero element, then \(b\) is a rational power of \(a\).
\end{lemma}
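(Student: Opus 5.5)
The plan is to exploit the fact that the right-hand representation makes $S$ invariant under multiplication by $b$. The finiteness of $A$ then forces a coincidence of the form $b^{n}=a^{k}$ with $n\ge 1$ and $k$ an integer.

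\emph{Step 1 (invariance).} From $S=\bigcup_{k=0}^{\infty} b^k B$ we get
\[
bS=\bigcup_{k=1}^{\infty} b^{k}B\subset S .
\]
Iterating, $b^{n}S\subset S$ for every $n\ge 0$. Fix a non-zero element $s\in S$, which exists by hypothesis. Then $b^{n}s\in S$ for all $n\ge 0$.

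\emph{Step 2 (use the left-hand representation).} Since $S=\bigcup_{k\ge0}a^kA$, for each $n\ge 0$ we can choose an integer $k_n\ge 0$ and an element $x_n\in A$ such that
\[
b^{n}s=a^{k_n}x_n .
\]
Because $s\neq 0$ and $b>1$, we have $b^n s\neq 0$, hence $x_n\neq 0$. Now $A$ is finite while $n$ ranges over infinitely many values. By the pigeonhole principle there exist $n>m\ge 0$ with $x_n=x_m=:x\neq 0$.

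\emph{Step 3 (conclude).} Dividing the two relations gives
\[
b^{\,n-m}=a^{\,k_n-k_m}.
\]
Since $b>1$ and $n-m\ge 1$, the left side exceeds $1$. Because $a>1$, this forces $k_n-k_m\ge 1$. Taking logarithms yields
\[
\frac{\log b}{\log a}=\frac{k_n-k_m}{n-m}\in\mathbb{Q}_{>0},
\]
that is, $b=a^{(k_n-k_m)/(n-m)}$ is a rational power of $a$.

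The only genuinely non-obvious point is Step 1: the invariance $bS\subset S$ comes for free from the $b$-side union, whereas the $a$-side union is what supplies finiteness. One must also make sure the chosen element is non-zero, so that division is legitimate and $x\neq0$. This is exactly where the hypothesis that $S$ contains a non-zero element enters.
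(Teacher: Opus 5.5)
Your proof is correct and is essentially the paper's argument. The paper takes logarithms and observes that $\log S^+$ has finite image in $\mathbb{R}/\alpha\mathbb{Z}$ (finiteness of $A$) while containing every forward $\beta$-orbit ($b$-invariance); it then argues by contrapositive that an irrational rotation would give infinitely many points, which is your pigeonhole step written additively. Extracting $b^{n-m}=a^{k_n-k_m}$ directly, as you do, also makes the paper's reduction to positive elements and its normalization $\min S^+=1$ unnecessary, since the sign of $x$ cancels when you divide.
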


\begin{proof}
Define \(S^+ = S \cap (0,\infty)\) and \(S^- = S \cap (-\infty,0)\); at least one of them is non-empty.  If \(S^+ \neq \emptyset\) we apply the argument below to \(S^+\). If \(S^+ = \emptyset\) then \(S^- \neq \emptyset\); replacing \(A,B\) by \(-A,-B\) leaves \(a,b\) and \(|A|\) unchanged and turns \(S^-\) into a set of positive numbers. Thus we may assume without loss of generality that \(S^+ \neq \emptyset\).

By multiplying all elements of \(A\) and \(B\) by a suitable positive constant (which does not affect \(a,b\) nor the union equality) we may also assume \(\min S^+ = 1\).

Set \(\alpha = \log a > 0\), \(\beta = \log b > 0\). Define \(X = \log S^+ = \{\log s : s \in S^+\}\).
Then \(X \subseteq [0,\infty)\), \(0 \in X\), and
\[
X = \log(A^+) + \mathbb{Z}_{\ge 0}\alpha = \log(B^+) + \mathbb{Z}_{\ge 0}\beta,
\]
where \(A^+ = A \cap (0,\infty)\) and \(B^+ = B \cap (0,\infty)\).

Consider the quotient map \(\pi \colon \mathbb{R} \to \mathbb{R}/\alpha\mathbb{Z}\).
Since \(X = \log(A^+) + \mathbb{Z}_{\ge 0}\alpha\), every element of \(X\) is congruent modulo \(\alpha\) to some element of \(\log(A^+)\).
Hence \(\pi(X) = \pi(\log(A^+))\) is finite.

On the other hand, from \(X = \log(B^+) + \mathbb{Z}_{\ge 0}\beta\) we see that for any \(x \in X\) the whole semi-orbit
\(x + \mathbb{Z}_{\ge 0}\beta\) is contained in \(X\). Therefore
\[
\pi(x + \mathbb{Z}_{\ge 0}\beta) \subseteq \pi(X).
\]
But \(\pi(x + \mathbb{Z}_{\ge 0}\beta) = \{ \pi(x) + k\beta \bmod \alpha : k \in \mathbb{Z}_{\ge 0} \}\).

If \(\beta/\alpha \notin \mathbb{Q}\), then the set \(\{k\beta \bmod \alpha : k \ge 0\}\) is infinite (it is dense in the closed subgroup it generates, which must be infinite for an irrational rotation).  Consequently \(\pi(x + \mathbb{Z}_{\ge 0}\beta)\) would be infinite, contradicting \(|\pi(X)| < \infty\).  Thus \(\beta/\alpha \in \mathbb{Q}\), which completes the proof.
\end{proof}

We are now in a position to prove \ref{thm2}. In the proof we will use the distribution of complex zeros of exponential polynomials, and consider the intersection with a certain line.
\begin{proof}
The proof of (HLC) under the no-interval condition is given in \cite[Lemma 5.1]{FengWang2009}. It remains to prove that, if the corresponding exponential polynomial $m(\xi)$ has a complex zero $z$ that is not purely imaginary, then (HLC) holds.

Indeed, consider the line $L$ connecting 0 and $z$ on the complex plane. By \cite[Theorem 3.6]{lapidus2013}, the zeros of $m(\xi)$ have bounded real parts, thus the set of zeros of $m(\xi)$ located on $L$ is finite (the zeros of analytical functions are discrete), denoted by $A$. Assume that the common ratio of $(\Phi,\textbf{p})$ is $a^{-1}$. Let $f(\xi)$ be the corresponding exponential polynomial of any weighted IFS $(\Psi,\textbf{q})$ with common ratio $b^{-1}$  in $\mathcal{M}_{\mathrm{HOM}%
}(\mu)$. By Lemma \ref{2.0}, $$Z(\widehat{\mu})=\bigcup_{k=0}^{\infty} a^{k} Z(m)=\bigcup_{k=0}^{\infty} b^{k} Z(f),$$ thus denote  $Z_L=Z(\widehat{\mu})\bigcap L$ and $B=Z(f)\bigcap L$, we have
$$Z_L=\bigcup_{k=0}^{\infty} a^k A = \bigcup_{k=0}^{\infty} b^k B .$$
If $a,b$ contains negative numbers, just consider $(\Phi,\textbf{p})\circ (\Phi,\textbf{p})$ and $(\Psi,\textbf{q})\circ(\Psi,\textbf{q})$, and this does not change the (HLC) property. Then we may assume  \(a,b > 1\). By Lemma \ref{2.1}, (HLC) holds true. The proof is complete.

\end{proof}

\section{Proof of Theorems \ref{thm1} and \ref{thm0}}
We first prove Theorem \ref{thm1} , since Theorem \ref{thm0} follows from an easier similar routine.
We need to verify condition (Z). We may always reduce an exponential polynomial to the following form by multiplying some suitable $ce^{\alpha z}$, which has no complex zero (and nothing essentially changes).
\begin{lemma}\label{3.0}
Let $f(z)=1+\sum_{j=1}^{n} a_j e^{\alpha_j z}$ be an exponential polynomial with $a_j\in R\setminus\{0\}$ and $\alpha_j$ be positive numbers, $n\geq 2$. If $\{\alpha_1,\dots,\alpha_n\}$ are linearly independent over $ Q$, then not all complex zeros of $f$ share a same real part.
\end{lemma}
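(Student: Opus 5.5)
The plan is to argue by contradiction using Kronecker's theorem together with Hurwitz's theorem, so that zeros of $f$ appear with real parts filling an open interval. This rests on the Bohr-type picture of zeros of exponential polynomials with $\mathbb{Q}$-independent frequencies. Write $c_j=|a_j|>0$ and, for $x\in\mathbb{R}$, consider the $n+1$ positive numbers
\[
1,\; c_1e^{\alpha_1 x},\;\dots,\;c_ne^{\alpha_n x}.
\]
The key observation is a \emph{strict polygon inequality}. If at some $x_0$ the largest of these numbers is strictly smaller than the sum of the others, then there are angles $\theta_1,\dots,\theta_n$ with
\[
1+\sum_{j=1}^n a_j e^{\alpha_j x_0}e^{i\theta_j}=0 ,
\]
since $n+1$ vectors of these lengths can be arranged into a closed polygon.

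First I would show that the set $I$ of $x$ satisfying the strict polygon inequality is a nonempty open subset of $\mathbb{R}$. Openness is clear by continuity. For nonemptiness, look at the upper envelope $E(x)=\max\{1,c_1e^{\alpha_1x},\dots,c_ne^{\alpha_nx}\}$. For $x\to-\infty$ the constant $1$ is the unique maximum, and for $x\to+\infty$ the term with the largest $\alpha_j$ is the unique maximum (the $\alpha_j$ are distinct by independence). Hence there is a point $x^*$ at which two of the terms tie for the maximum. At $x^*$ the maximum equals one of the other terms, and there is at least one further strictly positive term because $n\ge 2$. So the maximum is strictly less than the sum of the others, giving $x^*\in I$. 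Consequently $I$ contains a nondegenerate interval.

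Next, fix $x_0\in I$ and choose $\theta$ as above. Since $\alpha_1,\dots,\alpha_n$ are linearly independent over $\mathbb{Q}$, Kronecker's theorem gives real $y_k$ with $\alpha_j y_k\to\theta_j \pmod{2\pi}$ for all $j$ simultaneously. Then the translates $f(z+iy_k)$ converge uniformly on compact sets to
\[
G(z)=1+\sum_j a_j e^{i\theta_j}e^{\alpha_j z}.
\]
The function $G$ satisfies $G(x_0)=0$, and it is not identically zero because its frequencies $0,\alpha_1,\dots,\alpha_n$ are distinct and the constant term is $1$. By Hurwitz's theorem, for any $\varepsilon>0$ and all large $k$, $f(\cdot+iy_k)$ has a zero within $\varepsilon$ of $x_0$. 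So $f$ has a zero whose real part lies within $\varepsilon$ of $x_0$.

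Finally, pick two points $x_0\neq x_1$ in $I$ and $\varepsilon<|x_0-x_1|/2$. The previous step produces zeros of $f$ with real parts in disjoint intervals, so not all zeros of $f$ share the same real part.

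The main obstacle is the nonemptiness of $I$, i.e.\ guaranteeing that a closed polygon configuration exists at all. The tie-point argument on the upper envelope handles it, and this is exactly where $n\ge 2$ is used. For $n=1$ the tie gives $1=c_1e^{\alpha_1x}$ with nothing left over, and the zeros do lie on one vertical line. The remaining steps are standard applications of Kronecker and Hurwitz.
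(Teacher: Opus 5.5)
Your proof is correct, and it takes a genuinely different, self-contained route from the paper's.

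The paper gives no argument of its own. It appeals to Lapidus--van Frankenhuijsen's Theorem 3.6 in the generic nonlattice case, which describes how the real parts of the zeros are distributed between the two bounds $D_l$, $D_r$ defined by their equations (3.14)--(3.15). It then asserts that a direct computation gives $D_l\neq D_r$, which is where $n\ge 2$ enters.

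You instead reprove the mechanism behind that theory:
\begin{itemize}
\item the closed-polygon condition tells you exactly when some choice of phases makes the model function $G$ vanish at $x_0$;
\item the continuous Kronecker theorem, the only place where $\mathbb{Q}$-independence is used, realizes $G$ as a locally uniform limit of vertical translates $f(\cdot+iy_k)$;
\item Hurwitz's theorem transfers the zero back to $f$.
\end{itemize}
Your tie-point argument on the upper envelope plays the role of the paper's computation $D_l\neq D_r$.

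What your route buys is transparency and a slightly stronger, more robust conclusion. Every point of the open set $I$ is a limit of real parts of zeros of $f$, and the argument works verbatim for complex coefficients $a_j$. You also only need $I$ to be nonempty and open, so you never have to know whether it is a single interval. It need not be: a term with intermediate frequency and large coefficient can strictly dominate on a middle range, leaving a gap in the real parts.

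The paper's route is shorter on the page and matches the strip bound on real parts that it borrows from the same reference in the proof of Theorem~\ref{thm2}. However, it leaves the reader to extract the precise nonlattice statement and the ``direct computation'' from that reference.
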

\begin{proof}This is the `generic nonlattice case' in \cite[Theorem 3.6]{lapidus2013}. A direct computation shows that $$D_l\neq D_r$$ in \cite[Theorem 3.6 (3.14), (3.15)]{lapidus2013}, which gives the desired.\end{proof}

Next, we prove that the corresponding exponential polynomial is irreducible. Before this, we need a basic proposition.
\begin{proposition}
Let $N\ge 2$, let $k_1,k_2,\dots,k_N$ be non‑negative integers with at least two of them positive,
and let $a_1,a_2,\dots,a_N$ be non‑zero real numbers.
Then the polynomial
\[
f(x_1,x_2,\dots,x_N)=\sum_{i=1}^{N} a_i\, x_i^{k_i}
\]
cannot be written as a product of two non‑unit elements in the ring $R[x_1^{\pm1},\dots,x_N^{\pm1}]$ of real Laurent polynomials (allowing negative integer powers).
\end{proposition}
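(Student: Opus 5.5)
The statement needs an extra hypothesis: as written it is false. Take $N=2$, $k_1=k_2=2$, $a_1=1$, $a_2=-1$. Then $x_1^2-x_2^2=(x_1-x_2)(x_1+x_2)$, and both factors are non-units, since the units of $\mathbb{R}[x_1^{\pm1},\dots,x_N^{\pm1}]$ are exactly the monomials $c\,x^{m}$ with $c\neq 0$.

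So I would prove the statement under the following corrected hypothesis: \emph{the sum contains at least three non-zero terms, at least two of which are non-constant.} Here a constant term is one with $k_i=0$, and constant terms are lumped together. This is also what the application to the corresponding exponential polynomial needs, because the normalised form $1+\sum a_je^{\alpha_j z}$ of Lemma \ref{3.0} has $n\ge 2$ together with a constant term.

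\textbf{Step 1: reduce to the polynomial ring.} Suppose $f=gh$ in the Laurent ring. Multiply $g$ and $h$ by suitable monomials (units) so that both become polynomials not divisible by any $x_i$. Then $gh=x^{m}f$ for some monomial $x^m$. No $x_i$ divides $f$, because $f$ has at least two terms and the variables in distinct terms are distinct. Since $\mathbb{R}[x]$ is a UFD and $x_i$ is prime, this forces $m=0$. Neither $g$ nor $h$ is a monomial, because they are non-units. So $f$ would factor in $\mathbb{R}[x_1,\dots,x_N]$ into two factors that are not monomials, and it suffices to show $f$ is irreducible in the polynomial ring.

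\textbf{Step 2: single out one variable.} Choose an index $N$ with $k_N>0$ and write $f=a_Nx_N^{k_N}+G$. Here $G\in\mathbb{R}[x_1,\dots,x_{N-1}]$ is a sum of at least two non-zero terms in separate variables, at least one of them non-constant. View $f$ as a polynomial in $x_N$ over the field $F=\mathbb{R}(x_1,\dots,x_{N-1})$. Since $f$ is monic up to a constant in $x_N$, Gauss' lemma reduces irreducibility over $\mathbb{R}[x_1,\dots,x_{N-1}]$ to irreducibility over $F$.

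By Capelli's theorem, $x_N^{k_N}-c$ with $c=-G/a_N$ is irreducible over $F$ unless one of the following holds:
\begin{itemize}
\item[(a)] $c=h^p$ for some prime $p\mid k_N$ and some $h\in F$;
\item[(b)] $4\mid k_N$ and $c=-4h^4$ for some $h\in F$.
\end{itemize}

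\textbf{Step 3 (the main point): $G$ is not a constant multiple of a $p$-th power with $p\ge 2$.} Suppose $G=\lambda h^p$. Since $G$ is a polynomial and $\mathbb{R}[x]$ is a UFD, we may take $h$ to be a polynomial. Pick a variable $x_1$ that actually occurs in $G$ with exponent $k_1>0$. Differentiating gives
\[
a_1k_1x_1^{k_1-1}=\lambda p\,h^{p-1}\,\partial_{x_1}h .
\]
Hence $h^{p-1}$ divides a monomial, so $h$ is a monomial up to a constant, and then $G=\lambda h^p$ is a single monomial. This contradicts the fact that $G$ has at least two terms in distinct monomials. The same argument excludes $G=4a_Nh^4$ in case (b). Therefore $x_N^{k_N}-c$ is irreducible over $F$, and so is $f$.

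I expect the only delicate points to be the careful statement of Capelli's criterion, including the $-4h^4$ case, and the bookkeeping in Step 1. In the original two-term setting with no constant, the hypothesis genuinely fails: the Capelli obstruction is realised exactly when $-a_1/a_2$ admits suitable roots, as in the counterexample above.
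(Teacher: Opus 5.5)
Your proposal is correct, and your first point is the crucial one: the proposition as stated is false. The polynomial $x_1^2-x_2^2=(x_1-x_2)(x_1+x_2)$ meets every hypothesis. So do $x_1^3+x_2^3$ and $x_1^4+4x_2^4=(x_1^2+2x_1x_2+2x_2^2)(x_1^2-2x_1x_2+2x_2^2)$, so positive coefficients would not rescue it either. The paper's proof fails exactly at its last step. It asserts that a product of two non-monomial polynomials must contain a term involving two different variables, which overlooks that the mixed terms can cancel. Its reduction to ordinary polynomials (``$GH$ would tend to infinity'') is also only valid after normalising by units. Your Step 1 does that normalisation properly, using that each $x_i$ is prime and does not divide $f$.

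Your route is therefore genuinely different from the paper's, and unlike the paper's it works. It combines:
\begin{itemize}
\item Gauss's lemma, since the leading coefficient in $x_N$ is a non-zero constant;
\item Capelli's criterion for $x_N^{k_N}-c$ over $\mathbb{R}(x_1,\dots,x_{N-1})$, including the $-4h^4$ case;
\item integral closedness of the UFD, which makes $h$ a polynomial;
\item the identity $a_1k_1x_1^{k_1-1}=\lambda p\,h^{p-1}\partial_{x_1}h$, which forces $h$, and hence $G$, to be a monomial.
\end{itemize}
Your corrected hypothesis (at least three non-zero terms after lumping constants) is exactly what prevents $G$ from being a monomial. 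This is precisely where the two-term case breaks down, as a Capelli obstruction.

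The corrected version also suffices for the only place the proposition is used, Lemma \ref{3.2}. There $R=1+\sum_{j=1}^m a_jy_j^{t_j}$ with $m\ge 2$ has a constant term and at least two non-constant terms. The price of your approach is invoking Capelli's theorem, but it is the natural tool, since it is exactly what detects the counterexamples.
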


\begin{proof}
Assume, for a contradiction, that $f=GH$ with $G,H\in R[x_1^{\pm1},\dots,x_N^{\pm1}]$ both non‑units (i.e.\ neither $G$ nor $H$ is a monomial). Clearly, $G$ and $H$ can be taken to be ordinary polynomials as well, since otherwise $GH$ would tend to infinity if we let one variable (with negative powers) tend to 0 and fix the others.
It follows that there would be terms in $f$ containing at least two different variables, a contradiction!
\end{proof}

The set of all exponential polynomials, equipped with pointwise addition and multiplication, will be denote by $EP$. Factorization theory in $EP$ was initiated by J.\,F.\ Ritt~\cite{ritt1927} and has connections with difference
algebra and transcendental number theory. A non-zero element $g\in EP$ is \emph{irreducible} if it is not a unit (the units are precisely the nowhere-zero functions $c e^{\beta z}$ with $c\neq0$) and cannot be expressed as a product of
two non-units in $EP$.

\begin{lemma}\label{3.2}
Let $f(z)=1+\sum_{j=1}^{n} a_j e^{\alpha_j z}$ be an exponential polynomial with $a_j\in R\setminus\{0\}$ and $\alpha_j$ be positive numbers, $n\geq 2$. If $\{\alpha_1,\dots,\alpha_n\}$ are linearly independent over $ Q$, then $f$ is irreducible in the ring $EP$.
\end{lemma}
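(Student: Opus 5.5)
Set $x_j=e^{\alpha_j z}$, so that $f$ becomes the Laurent polynomial $F(x_1,\dots,x_n)=1+\sum_{j=1}^n a_j x_j$, and use the preceding Proposition to rule out factorizations. The point is that $\{\alpha_1,\dots,\alpha_n\}$ is linearly independent over $\mathbb{Q}$, so the exponentials $e^{\alpha_j z}$ behave like algebraically independent variables.

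The first step is to reduce factorizations in $EP$ to factorizations of Laurent polynomials. Suppose $f=gh$ with $g,h\in EP$ non-units. Let $\Gamma\subset\mathbb{C}$ be the $\mathbb{Q}$-vector space spanned by all frequencies appearing in $g$ and $h$, together with $\alpha_1,\dots,\alpha_n$. Extend $\{\alpha_j\}$ to a $\mathbb{Q}$-basis $\{\alpha_1,\dots,\alpha_n,\beta_1,\dots,\beta_m\}$ of $\Gamma$. Choose a common denominator $D$, so that every frequency of $g$ and $h$ lies in the $\mathbb{Z}$-span of $\alpha_j/D$ and $\beta_l/D$. Put $y_j=e^{\alpha_j z/D}$ and $w_l=e^{\beta_l z/D}$. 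Since the basis is $\mathbb{Q}$-independent, distinct integer exponent vectors give distinct exponentials, and distinct exponentials are linearly independent functions. Hence $EP$ restricted to $\Gamma$-frequencies is isomorphic to the Laurent ring $\mathbb{C}[y^{\pm1},w^{\pm1}]$. Under this isomorphism,
\[
f=1+\sum_j a_j y_j^{D}=G\,H,
\]
where $G,H$ are non-monomial Laurent polynomials, because units correspond exactly to monomials $ce^{\beta z}$.

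The second step is to apply the Proposition with $N=n+1$. The variables are $y_1,\dots,y_n$ and an auxiliary variable for the constant $1$; alternatively, treat $1$ as a term with $k=0$, which is allowed since the $k_i$ are only required to be non-negative and at least two of them positive. Here $n\ge2$ gives at least two positive exponents $k_i=D$. The $w$ variables do not appear in $f$, so setting $w_l=1$ in $G$ and $H$ keeps a factorization. I must check that the specialized factors remain non-monomial. If they did not, I would instead argue by degree in each $w_l$: since $f$ has degree zero in $w_l$, the factors $G$ and $H$ must be $w$-homogeneous monomial multiples, so after factoring out monomials they are free of $w$.

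One issue is that the Proposition is stated over $\mathbb{R}$, while the factors in $EP$ may have complex coefficients. Its proof, a degree/Newton-polytope style argument, works verbatim over $\mathbb{C}$. This gives a contradiction, so $f$ is irreducible.

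The main obstacle is this reduction step, namely rigorously transporting a factorization in $EP$ to a genuine polynomial factorization in the variables $y_j$. The Proposition's own proof is also sketchy. If needed, I would replace it by a direct argument: $1+\sum a_j y_j^D$ is of degree $D$ in the variable $y_1$ with constant-in-$y_1$ part $1+\sum_{j\ge2}a_jy_j^D$. Any factorization splits the $y_1$-degree between the factors. Comparing top and bottom $y_1$-coefficients, one factor has both extremes depending only on the other variables, and then Eisenstein/Gauss-type reasoning in $y_1$ forces that factor to be a monomial.
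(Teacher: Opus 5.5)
Your route is the paper's route. You use the $\mathbb{Q}$-independence of the $\alpha_j$ to turn a factorization in $EP$ into a factorization of $1+\sum_j a_jy_j^D$ into non-monomial Laurent polynomials, and then you rule that out. Your reduction is correct: extend $\{\alpha_j\}$ to a $\mathbb{Q}$-basis, clear denominators, identify exponential polynomials with frequencies in the resulting lattice with $\mathbb{C}[y^{\pm1},w^{\pm1}]$, and strip the $w$-variables by the max/min-degree argument. It is a self-contained replacement for the paper's appeal to Ritt's first theorem. You are also right that the factors may have complex coefficients, which the paper ignores.

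The step you cannot rest on is the Proposition, because it is false as stated. The polynomial $x_1^2-x_2^2=(x_1-x_2)(x_1+x_2)$ satisfies all its hypotheses, and over $\mathbb{C}$ so does $x_1^2+x_2^2=(x_1+ix_2)(x_1-ix_2)$. Its "proof" ignores cancellation of cross terms, so "it works verbatim over $\mathbb{C}$" justifies nothing, and your main line has a gap exactly there. The special case you need is true: a constant term plus $n\ge2$ terms in distinct variables. Your fallback is the right fix, but the sketch never names the prime for Eisenstein, and that prime is the whole content.

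Make it precise as follows. Put $A=\mathbb{C}[y_2,\dots,y_n]$ and $c=1+\sum_{j\ge2}a_jy_j^D\in A$.
\begin{itemize}
\item Since $n\ge2$, $c$ is nonconstant.
\item $c$ is squarefree. A repeated prime factor would divide $\partial c/\partial y_2=Da_2y_2^{D-1}$, hence be $y_2$ up to a unit, whereas $y_2\nmid c$ (set $y_2=0$).
\item Take any prime $\pi\mid c$; then $\pi^2\nmid c$ and $\pi$ does not divide the constant $a_1$. Eisenstein at $\pi$ makes $a_1y_1^D+c$ irreducible over $\mathrm{Frac}(A)$.
\item Gauss's lemma then gives irreducibility in $\mathbb{C}[y_1,\dots,y_n]$, since the content is $1$.
\item The polynomial has constant term $1$, so no $y_i$ divides it. Hence any Laurent factorization can be normalized, by moving monomials, into a polynomial factorization, and one factor must be a monomial.
\end{itemize}
This is exactly where $n\ge2$ enters, since $1+a_1y_1^D$ does factor. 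With this argument in place of the Proposition, your proof is complete, and more rigorous than the paper's.
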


\begin{proof}
 Let $y_j= e^{\alpha_j z}$, and define \[
Q(y_1,\dots,y_m) = 1 + \sum_{j=1}^{m} a_j y_j.
\]According to Ritt’s first theorem in \cite{ritt1927}, since the frequencies are linearly independent over \(\mathbb{Q}\), every non‑trivial factor of $f$ arises from some positive integers \(t_1,\dots,t_m\) such that the polynomial
\[
R(y_1,\dots,y_m) = Q(y_1^{t_1},\dots,y_m^{t_m}) = 1 + \sum_{j=1}^{m} a_jy_j^{t_j}
\]
admits a decomposition into non‑constant Laurent polynomials with constant term 1. By the above proposition, $Q$ is irreducible, showing the desired.
\end{proof}

We are now in a position to prove Theorem \ref{thm1}.
\begin{proof}
Let  $(\Psi,\textbf{q})\in \mathcal{M}_{\mathrm{HOM}%
}(\mu)$ .  Denote by $a,b$ the common ratio of  $\Phi,\Psi$ respectively.  Denote by $f,g$ the corresponding exponential polynomial of $(\Phi,\textbf{p})$ and $(\Psi,\textbf{q})$. Theorem \ref{thm2} and Lemma \ref{3.0} guarantee the property (HLC). We may find positive integers $p,q$ satisfying $(p,q)$=1 or 2 such that $a^p=b^q$. Then by using \eqref{eq:prod}, we obtain
$$f(x)f(ax)...f(a^{p-1}x)=g(x)g(bx)...g(b^{q-1}x).$$
By Ritt's unique factorization theorem in \cite{ritt1927}, $g$ has no simple factor, and denote by $k$ the cardinality of its irreducible factors (multiplicity taken into account). Since $f$ is irreducible by assumption and Lemma \ref{3.2}, we have $$p=kq.$$
When $(p,q)$=1, we know that $q=1$ and so $p=k$, thus $$g(x)=f(x)f(ax)...f(a^{p-1}x)$$ and so $(\Psi,\textbf{q})$ is a $p$th iteration of $(\Phi,\textbf{p})$.
When $(p,q)$=2, we know that $q=2$ and so $p=k$. The result is the same when $b=a^k$, and it is impossible that $b=-a^k$. To see this, otherwise, we would have $$g(x)g(-a^kx)=f(x)f(ax)...f(a^{2k-1}x).$$ It follows that $$g(x)=e^{Cx}f(x)...f(a^{k-1}x), g(-a^kx)=e^{-Cx}f(a^{k}x)...f(a^{2k-1}x)$$ for some constant $C$, which further implies that
$$f(x)=e^{cx}f(-x)$$ for some constant $c$. This would imply that the frequencies of $f$ form a finite symmetric set, a contradiction to the linear independency!
The proof is complete.
\end{proof}

We prove Theorem  \ref{thm0} in a similar way.
\begin{proof}
Let  $(\Psi,\textbf{q})\in \mathcal{M}_{\mathrm{HOM}%
}(\mu)$ .  Denote by $a,b$ the common ratio of  $\Phi,\Psi$ respectively.  Denote by $f,g$ the corresponding exponential polynomial of $(\Phi,\textbf{p})$ and $(\Psi,\textbf{q})$. Theorem \ref{thm2} and a direct verification that $$f=1-p+pe^z$$
has a complex zero that is not purely imaginary guarantee the property (HLC). We may find positive integers $p,q$ satisfying $(p,q)$=1 or 2 such that $a^p=b^q$. Then by using \eqref{eq:prod}, we obtain
$$f(x)f(ax)...f(a^{p-1}x)=g(x)g(bx)...g(b^{q-1}x).$$
By Ritt's unique factorization theorem in \cite{ritt1927}, $g$ has only simple factors, and denote by $k$ the cardinality of its irreducible factors (multiplicity taken into account). Since $f$ itself is a simple factor, by assumption and Lemma \ref{3.2}, we have $$p=kq.$$
The rest of the proof is virtually identical. To see that $b=-a^k$ is impossible, just note that $$f(x)=e^{cx}f(-x)$$ for some constant $c$ would imply that the probability vector is (0.5,0.5). The proof is complete.
\end{proof}

\section{The inhomogeneous case and some examples}
The proof of Theorem \ref{thm3} is very short.
\begin{proof}Let $(\Psi,\textbf{q})$ be a weighted IFS in $\mathcal{M}_{\mathrm{SSC}}(\mu)$. Then the support of $\mu$ is $K$, thus $\Psi$ is a generating IFS of $K$ with the SSC. By assumption, $\Psi$ is derived from $\Phi$, and they share the same attractor $K$. For each map $\phi_{w}$ in $\Psi$, where $w$ is a word associated with $\Phi$, we consider its associated probability $q_{w}$. Since $\Psi$ satisfies the SSC, $$q_{w}=\mu(\phi_{w}(K))=p_{w},$$ showing the desired. \end{proof}

The following example on middle third Cantor set shows the complexity of the structrue of $\mathcal{M}(\mu)$. Thus separation conditions must be required to guarantee the existence of a minimal element. With more effort, the following example can be generalised to characterize the elements in $\mathcal{M}(\mu)$. In particular, under the same condition of Theorem \ref{thm3}, the structure of $\mathcal{M}(\mu)$ could be fully characterized, though the statement is inevitably complicated.
\begin{example}
Let $\mu$ be the self-similar measure generated by the IFS
\[
f(x)=\frac{1}{3}x,\qquad g(x)=\frac{1}{3}x+\frac{2}{3}
\]
with probabilities $p$ and $1-p$ ($0<p<1$).
Consider the IFS consisting of the four maps
\[
F_1=f,\quad F_2=f\circ f,\quad F_3=f\circ g,\quad F_4=g,
\]
together with the corresponding probabilities
\[
q,\quad p(p-q),\quad (1-p)(p-q),\quad 1-p,
\]
where $0\le q\le p\le 1$. Let $\nu$ be its self-similar measure. We prove that $\nu=\mu$.

The Fourier transform $\hat\mu(\xi)=\int e^{-2\pi i\xi x}\,d\mu(x)$ satisfies
\begin{equation}\label{eq:mu}
\hat\mu(\xi)=\hat\mu\!\left(\frac{\xi}{3}\right)\Bigl[p+(1-p)e^{-4\pi i\xi/3}\Bigr].
\end{equation}
The contraction ratios and translations of the new maps are
\[
F_1(x)=\frac{x}{3},\;
F_2(x)=\frac{x}{9},\;
F_3(x)=\frac{x}{9}+\frac{2}{9},\;
F_4(x)=\frac{x}{3}+\frac{2}{3}.
\]
Hence any self-similar measure $\nu$ for the new system must obey
\begin{align}\label{eq:nu}
\hat\nu(\xi)&=q\,\hat\nu\!\left(\frac{\xi}{3}\right)
            +p(p-q)\,\hat\nu\!\left(\frac{\xi}{9}\right)
            +(1-p)(p-q)\,\hat\nu\!\left(\frac{\xi}{9}\right)e^{-4\pi i\xi/9}
            +(1-p)\,\hat\nu\!\left(\frac{\xi}{3}\right)e^{-4\pi i\xi/3}\notag\\
&=\hat\nu\!\left(\frac{\xi}{3}\right)\bigl[q+(1-p)e^{-4\pi i\xi/3}\bigr]
  +\hat\nu\!\left(\frac{\xi}{9}\right)(p-q)\bigl[p+(1-p)e^{-4\pi i\xi/9}\bigr].\tag{2}
\end{align}
Replace $\xi$ by $\xi/3$ in \eqref{eq:mu} gives
\begin{equation}\label{eq:shift}
\hat\mu\!\left(\frac{\xi}{3}\right)=\hat\mu\!\left(\frac{\xi}{9}\right)\bigl[p+(1-p)e^{-4\pi i\xi/9}\bigr].
\end{equation}
Now substitute $\hat\mu$ for $\hat\nu$ in the right‑hand side of \eqref{eq:nu} gives
\begin{align*}
\text{RHS}
&=\hat\mu\!\left(\frac{\xi}{3}\right)\bigl[q+(1-p)e^{-4\pi i\xi/3}\bigr]
  +\hat\mu\!\left(\frac{\xi}{9}\right)\bigl[p+(1-p)e^{-4\pi i\xi/9}\bigr]\\
&=\hat\mu\!\left(\frac{\xi}{3}\right)\bigl[q+(1-p)e^{-4\pi i\xi/3}+(p-q)\bigr]\\
&=\hat\mu\!\left(\frac{\xi}{3}\right)\bigl[p+(1-p)e^{-4\pi i\xi/3}\bigr]=\hat\mu(\xi).
\end{align*}
Thus $\hat\mu$ exactly satisfies the functional equation \eqref{eq:nu}, showing the desired.
\end{example}

\end{document}